\documentclass[11pt,a4paper]{article}
\usepackage[T1]{fontenc}
\usepackage{lmodern}
\usepackage[margin=28mm]{geometry}
\usepackage{amsmath,amssymb,amsthm,mathtools}
\usepackage{microtype}
\usepackage[hidelinks]{hyperref}
\hypersetup{pdftitle={The small Davenport constant of H27 times C3 to the r},
pdfauthor={Andreas Volkmann}}
\newtheorem{theorem}{Theorem}[section]
\newtheorem{lemma}[theorem]{Lemma}
\newtheorem{corollary}[theorem]{Corollary}
\theoremstyle{remark}

\newcommand{\F}{\mathbb F_3}
\newcommand{\dc}{\mathsf d}
\newcommand{\UT}{\operatorname{UT}}
\newcommand{\rad}{\operatorname{rad}}

\title{The small Davenport constant of\texorpdfstring{\\ $H_{27}\times C_3^r$}{ H27 times C3 to the r}}
\author{Andreas Volkmann}
\date{17 September 2026}
\begin{document}
\maketitle
\begin{abstract}
Let $H_{27}=\UT_3(\mathbb F_3)$ be the nonabelian group of order $27$
and exponent $3$. We prove that
\[
\dc(H_{27}\times C_3^r)=2r+6\qquad\text{for every integer }r\geq0.
\]
The proof combines an affine coefficient identity in the group algebra of an
elementary abelian group with a decomposition of the nonorthogonality graph
of $\mathbb F_3^2$ into eight edge-disjoint zero-sum triangles. It is uniform
in $r$, does not use the value of the small Davenport constant for a smaller
nonabelian group, and requires no computational enumeration.
\end{abstract}

\section{Introduction}
For a finite group $G$, a sequence over $G$ is a finite unordered list of
elements of $G$, with repetitions allowed. A nonempty subsequence is
\emph{product-one} if its terms can be ordered so that their product is the
identity. A sequence is \emph{product-one-free} if it has no such subsequence.
The small Davenport constant $\dc(G)$ is the maximum length of a
product-one-free sequence over $G$.

For an elementary abelian $p$-group, Olson's theorem gives
$\dc(C_p^n)=n(p-1)$~\cite{Olson}.
For the exponent-$p$ Heisenberg group $H_{p^3}=\UT_3(\mathbb F_p)$,
the value $\dc(H_{27})=6$ was determined computationally by Cziszter,
Domokos and Sz\"{o}ll\H{o}si~\cite{CDS} and subsequently proved
theoretically by Godara and Sarkar~\cite{GS}. A uniform proof of
$\dc(H_{p^3})=3p-3$ for all odd primes $p$ was given in~\cite{Volkmann}.
The present paper considers adjoining arbitrarily many elementary abelian
central direct factors at the prime $3$.

\begin{theorem}\label{thm:main}
For every integer $r\geq0$,
\[
\dc(H_{27}\times C_3^r)=2r+6.
\]
\end{theorem}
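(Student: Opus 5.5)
Write $G=H_{27}\times C_3^r$, $Z=Z(G)\cong C_3^{r+1}$ (generated by the commutator $z$ of $H_{27}$ and the $C_3^r$ factor), and $\pi\colon G\to G/Z\cong\F^2$ for the abelianization modulo the centre.

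\emph{Lower bound.} I take a product-one-free sequence of length $6$ over $H_{27}$, which exists because $\dc(H_{27})=6$. Concretely, I would use $x,x,y,y,z,z$, where $x,y$ generate $H_{27}$ and $z=[x,y]$. Then I append $e_1,e_1,\dots,e_r,e_r$, with $e_i$ the standard generators of $C_3^r$. A product-one subsequence projects trivially to $C_3^r$, so it cannot use any $e_i$. It therefore lies in $H_{27}$, which is impossible. This gives $\dc(G)\ge 2r+6$, and it does not depend on the value for a smaller nonabelian group beyond this explicit example. For this example I would check product-one-freeness directly: the images in $\F^2$ force no $x$ and no $y$, and $z,z$ alone is free.

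\emph{Upper bound.} Let $S$ be a sequence over $G$ with $|S|\ge 2r+7$. I want a product-one subsequence. The plan is to pass through the abelian quotient $\F^2\times C_3^r$ and then control the central ambiguity. For a subsequence $T$ whose image in $G/[G,G]\cong C_3^{r+2}$ is zero-sum, the set of achievable ordered products lies in the coset $\langle z\rangle$. Reordering changes the product by $z^{\sum\omega(\pi(a),\pi(b))}$, where $\omega$ is the symplectic form on $\F^2$, summed over transposed pairs. So $T$ is product-one if some ordering realizes the right power of $z$.

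I would encode this in the group algebra $\F[C_3^{r+2}]$, or better $\F[\F^2\times C_3^r]$. The idea is an affine coefficient identity expressing the element $\prod_{a\in S}(1-g_a)$ with the extra $z$-grading, so that a nonvanishing coefficient forces a zero-sum subsequence with prescribed $z$-twist. This is Olson's method with one extra central coordinate. Length $2r+7$ exceeds the abelian bound $2(r+2)=2r+4$ by $3$, and that slack of $3$ is what must absorb the $\langle z\rangle$ ambiguity.

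The key combinatorial input is the \emph{nonorthogonality graph} on $\F^2\setminus\{0\}$, where $u\sim v$ iff $\omega(u,v)\neq0$. This graph is complete multipartite on $4$ lines, giving $K_{2,2,2,2}$ with $24$ edges. I would decompose it into $8$ edge-disjoint triangles $\{u,v,w\}$ with $u+v+w=0$. Such a triangle is a zero-sum triple whose two cyclic orders give products differing by $z^{\pm1}$, so all three central values are reachable once one of them is.

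The argument then splits into cases. If $\pi(S)$ contains three elements forming one of these triangles, possibly together with complementary terms, I adjust an existing zero-sum subsequence in the abelian quotient by inserting the triangle in two different orders. This realizes every $z$-power, and so yields product-one. Otherwise the support of $\pi(S)$ avoids a full zero-sum nondegenerate triangle. The edge-disjoint decomposition then bounds how many noncommuting pairs occur, and essentially forces $\pi(S)$ into a subgroup or line of $\F^2$ plus a few terms. In that case the relevant subgroup of $G$ is abelian of rank $r+2$ (a line times $Z$), with Davenport constant $2(r+2)+1$. This is handled directly by Olson's theorem on $C_3^{r+2}$, plus a short count of the stray terms.

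\emph{Main obstacle.} The hardest step is the middle one: proving the affine coefficient identity that couples the $z$-twist to zero-sum subsequences of the abelian quotient, uniformly in $r$. The difficulty is ensuring that the three extra terms are exactly enough to force either a triangle or the abelian-subgroup case. I would first try to prove that, modulo the radical of the augmentation ideal, the $z$-twist of a zero-sum subsequence is an affine function of its $\F^2$-pair counts. The triangle decomposition would then supply the $8$ independent adjustments needed.
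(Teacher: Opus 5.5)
Your lower bound is correct and matches the paper's. The upper bound has a genuine gap: the case split rests on a false claim and a false dichotomy, and the coefficient identity you defer as the ``main obstacle'' is exactly the missing content.

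\emph{The triangle case.} A triangle does not realize all three central values. Suppose $\pi(a)+\pi(b)+\pi(c)=0$ with pairwise nonzero $\omega$. Then $abc$ is central, so the three rotations of a cyclic order give the same product. In the additive coordinates $x*y=x+y+\tfrac12\omega(x,y)z$, the six orderings give only $s(T)\pm\tfrac12\omega(a,b)z$, never $s(T)$ itself. For instance, $(1,0,0,0),(0,1,0,0),(-1,-1,0,0)$ is product-one-free although its projection is a full triangle. The $C_3^r$-coordinates of the three terms also need not cancel, so their products need not even lie in $\langle z\rangle$. Reaching all three values requires two \emph{disjoint} noncommuting pairs inside one set whose additive sum lies in $\F z$. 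Nothing in your outline produces such a set from length $2r+7$. In fact, in a product-one-free sequence every nonempty additive zero-sum subset has noncommutation graph exactly one such triangle plus isolated vertices. Triangles are therefore the typical situation, not a route to a contradiction.

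\emph{The complementary case.} A projected support avoiding all eight zero-sum triangles need not lie in a line. For a basis $u,v$, the supports $\{u,v\}$ and $\{u,v,u+v\}$ avoid every triangle, yet such terms generate the nonabelian group, so Olson's theorem for $C_3^{r+2}$ does not apply. That case is in fact easy for a different reason: Olson's theorem in $\F^{r+3}$ gives a nonempty additive zero-sum subset, and by the structure above it must contain a triangle.

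\emph{The missing mechanism.} Use the triangles as a \emph{weighting}, not for a case split, with Olson-type algebra over the full additive group $E\cong\F^{r+3}$ including the $z$-coordinate. Put $m=r+3$ and let $S$ be product-one-free of length $2m+1$. In $\F[E]$ one has $I^{2m+1}=0$, and elements of $I^{2m-1}$ have affine coefficient functions. The first fact gives $\sum_{A\ne\varnothing,\,s(A)=0}(-1)^{|A|}=-1$. For a noncommuting pair $i,j$, the other $2m-1$ terms have no subsum equal to $\pm z-x_i-x_j$. Otherwise, adjoining $i,j$ gives a set with sum $\pm z$ whose ordering corrections include both $\pm1$. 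So the affine coefficient function vanishes at these two points, hence also at $-x_i-x_j$. This gives $\sum_{A\supseteq\{i,j\},\,s(A)=0}(-1)^{|A|}=0$. Selecting one edge from each of the eight triangles gives weights $\rho$ summing to $1$ over the pairs of every nonempty additive zero-sum set, by the triangle structure above. Isolated terms project to $0$, being orthogonal to a basis of $\F^2$. Summing the pair identities against $\rho$ then yields $0=-1$.

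Your suggested identity relating the $z$-twist to $\F^2$ pair counts modulo a radical is not this mechanism and is left unproved. As written, the upper bound is not established.
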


The lower bound is immediate from a basis sequence with each term repeated
twice. The upper bound uses the additive vector space underlying suitable
coordinates for the group. For a hypothetical product-one-free sequence of
length $2(r+3)+1$, a high-degree augmentation product gives a coefficient
identity for every noncommuting pair. Each nonempty additive zero-sum
subsequence has a noncommutation graph consisting of one triangle and
isolated vertices. We assign weights to pairs so that each such triangle
has total weight one. Summing the pair identities then contradicts the
coefficient identity for the whole sequence.

All subsets used below refer to indexed occurrences of sequence terms.
Thus equal group elements in different positions are always treated as
distinct occurrences. All coefficient identities are over $\F$.

\section{Coordinates and ordering corrections}
Put $m=r+3$ and write
\[
E=\F^2\oplus\F z\oplus\F^r,
\qquad x=(a,b,c,t),\qquad z=(0,0,1,0).
\]
Define
\[
\pi(x)=(a,b),\qquad
\omega(x,y)=a b'-a'b
\quad\text{for }y=(a',b',c',t').
\]
On $E$, set
\begin{equation}\label{eq:law}
x*y=x+y+\tfrac12\omega(x,y)z.
\end{equation}
The scalar $1/2$ denotes the inverse of $2$ in $\F$.
This group is isomorphic to $H_{27}\times C_3^r$ under
\[
(a,b,c,t)\longmapsto
\left(\begin{pmatrix}
1&a&c+\tfrac12 ab\\
0&1&b\\
0&0&1
\end{pmatrix},t\right).
\]
Indeed, matrix multiplication yields~\eqref{eq:law} after subtracting
$\tfrac12(a+a')(b+b')$ from the top-right entry. Its identity is the zero
vector. Two terms commute if and only if their $\omega$-pairing is zero.
The alternating form has radical $\F z\oplus\F^r$, and its induced form
on $E/\rad(\omega)\cong\F^2$ is the determinant.

Let $S=(x_1,\ldots,x_N)$ be a sequence in these coordinates. For an index
set $A\subseteq[N]$, put $s(A)=\sum_{i\in A}x_i$, where the sum is additive
in $E$. Repeated use of~\eqref{eq:law} shows that for any ordering
$i_1,\ldots,i_k$ of $A$,
\begin{equation}\label{eq:order}
x_{i_1}*\cdots*x_{i_k}
=s(A)+\frac12\sum_{u<v}\omega(x_{i_u},x_{i_v})z.
\end{equation}
Let $\Omega(A)\subseteq\F$ be the set of scalar correction values in this
formula as the ordering varies.

\begin{lemma}\label{lem:local}
Suppose that $S$ is product-one-free. Then:
\begin{enumerate}
\item If $s(A)\in\{z,-z\}$, all terms indexed by $A$ commute pairwise.
\item If $A\ne\varnothing$ and $s(A)=0$, the terms indexed by $A$ do not
all commute pairwise.
\item If $s(A)\in\F z$, the set $A$ cannot contain two disjoint
noncommuting pairs of occurrences.
\end{enumerate}
\end{lemma}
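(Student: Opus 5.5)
The plan is to read everything off \eqref{eq:order}. Product-one-freeness says exactly this: for every nonempty $A$ and every ordering of $A$, the right-hand side of \eqref{eq:order} is nonzero. So if $s(A)=\lambda z$, product-one-freeness forces $-2\lambda\notin\Omega(A)$, where we identify $\Omega(A)$ with the set of values of $\Sigma:=\sum_{u<v}\omega(x_{i_u},x_{i_v})$. Each part of the lemma therefore reduces to showing that $\Omega(A)$ is large when noncommuting pairs are present. Throughout I use one elementary move: swapping two adjacent terms $x,y$ in an ordering changes $\Sigma$ by $\omega(y,x)-\omega(x,y)=-2\omega(x,y)=\omega(x,y)$ in $\F$.

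Part 2 is immediate. If $s(A)=0$ and all terms of $A$ commute pairwise, then every correction vanishes. Hence any ordering of $A$ has product $0$, the identity, which is a product-one subsequence.

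I would do part 3 next. Suppose $A$ contains disjoint noncommuting pairs $\{x,y\}$ and $\{u,v\}$, with $\alpha=\omega(x,y)\neq0$ and $\alpha'=\omega(u,v)\neq0$. Fix an ordering of $A$ in which $x,y$ are adjacent and $u,v$ are adjacent, and call its value $c$. Swapping within either pair independently produces the values $c,\ c+\alpha,\ c+\alpha',\ c+\alpha+\alpha'$. If $\alpha'=\alpha$ these include $c,c+\alpha,c+2\alpha$, and if $\alpha'=-\alpha$ they include $c,c+\alpha,c-\alpha$. Either way $\Omega(A)=\F$. Then some ordering has $\tfrac12\Sigma=-\lambda$, giving a product-one subsequence.

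Part 1 is the main step, because now only one noncommuting pair is guaranteed, and swapping that pair yields only two values of $\Sigma$. Let $s(A)=\pm z$ and suppose $\omega(x,y)=\alpha\neq0$ for some $x,y$ in $A$.
\begin{itemize}
\item Since $s(A)$ lies in $\rad(\omega)$, we have $\sum_{w\in A}\omega(x,w)=\omega(x,s(A))=0$. Because $\omega(x,x)=0$ and $\omega(x,y)\neq0$, some third occurrence $w\notin\{x,y\}$ satisfies $\beta=\omega(x,w)\neq0$.
\item Put the three terms $x,y,w$ first, in some order, followed by a fixed ordering of the rest. Only the internal contribution of the triple varies. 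With $\gamma=\omega(y,w)$, the six orders give internal sums $\pm\alpha\pm\beta\pm\gamma$ with the sign patterns $(+,+,+)$, $(-,+,+)$, $(+,+,-)$, $(+,-,-)$, $(-,-,+)$, $(-,-,-)$.
\item A short case check over $\alpha,\beta\in\{\pm1\}$ and $\gamma\in\F$ shows that these six values cover all of $\F$. For instance, $xyw$ and $yxw$ differ by $\alpha$, and $xyw$ and $wxy$ differ by $-(\beta+\gamma)$, and one of the remaining orders supplies the missing residue.
\end{itemize}
Hence $\Omega(A)=\F$, so some ordering realizes the correction $\mp1$ and gives product $0$, a contradiction.

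The only delicate point I anticipate is organizing this three-element case check cleanly; everything else is bookkeeping with \eqref{eq:order}. The same third-element trick, which uses that $s(A)\in\F z$ lies in the radical, also gives an alternative route to part 3.
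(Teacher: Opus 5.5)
Your parts 2 and 3 are correct and match the paper's argument. For part 3 the paper also uses two independent adjacent swaps, whose value set $\{0,\alpha\}+\{0,\alpha'\}$ exhausts $\F$.

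Part 1, however, has a genuine gap: the claimed case check is false. Take $\beta=-\alpha$ and $\gamma=\alpha$, which is exactly the zero-sum triangle situation $\pi(w)=-\pi(x)-\pi(y)$. Then your six internal sums are $\alpha,-\alpha,-\alpha,\alpha,\alpha,-\alpha$. So $0$ is never attained, and the six orders produce only $\{1,-1\}$. Concretely, let $A$ consist of $(1,0,0,0)$, $(0,1,0,0)$ and $(-1,-1,1,0)$. Then $s(A)=z$, but only the values $\pm1$ occur, so ``Hence $\Omega(A)=\F$'' is wrong. Worse, with the rest of $A$ fixed after the triple, the values you actually produce form a translate $c_0+\{1,-1\}$. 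This translate omits the required value $-2\lambda$ whenever $c_0=-2\lambda$. So, as written, the argument does not produce a product-one ordering.

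The missing ingredient is symmetry under reversal. Reversing an ordering negates $\Sigma$, so the set of attainable values is closed under negation. One adjacent swap of your noncommuting pair already yields two distinct values. A subset of $\F$ that is closed under negation and has at least two elements contains both $1$ and $-1$. For $s(A)=\lambda z$ with $\lambda=\pm1$, you need $\Sigma=-2\lambda=\lambda\in\{1,-1\}$, so part 1 follows at once. This needs no third element and no case analysis, and it is exactly the paper's proof. Your diagnosis that ``only two values'' is a problem was the misstep: two values plus reversal symmetry suffice. You could instead rescue the triple argument by also reversing the whole ordering in the exceptional case, but then the triple becomes superfluous.
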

\begin{proof}
Reversing an ordering negates its correction, so
$\Omega(A)=-\Omega(A)$. If $A$ contains a noncommuting pair, place its two
terms next to each other and exchange them. The correction changes by a
nonzero scalar. Hence $\Omega(A)$ has at least two elements and, being
symmetric in $\F$, contains both $1$ and $-1$. This proves (1).
If all pairings vanish, every correction is zero, proving (2).

For (3), place each of two disjoint noncommuting pairs in adjacent positions
in one ordering. Their exchanges can be made independently. The four
corrections contain a translate of
$\{0,u\}+\{0,v\}=\F$ for some $u,v\ne0$.
If $s(A)\in\F z$, one of these corrections produces the identity.
\end{proof}

\section{An affine coefficient identity}
The group algebra used in this section is the \emph{commutative} group
algebra of the additive group of $E$, not the group algebra of~\eqref{eq:law}.
Write its basis elements as $X^x$, so $X^xX^y=X^{x+y}$, and let $I$ be its
augmentation ideal.

\begin{lemma}\label{lem:affine}
Let $R=(y_1,\ldots,y_{2m-1})$ be any sequence in $E\cong\F^m$.
The coefficient function
\[
f_R(w)=[X^w]\prod_{j=1}^{2m-1}(1-X^{y_j})
\]
is affine on $E$. Moreover, $I^{2m+1}=0$.
\end{lemma}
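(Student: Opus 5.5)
The plan is to identify the commutative group algebra $\F[E]$ with a truncated polynomial ring and read off both claims from its monomial basis. Fix a basis $e_1,\ldots,e_m$ of $E\cong\F^m$ and put $u_i=X^{e_i}-1$. Since $E$ is elementary abelian of exponent $3$ and we are in characteristic $3$, we have $u_i^3=X^{3e_i}-1=0$. The map $\F[u_1,\ldots,u_m]/(u_1^3,\ldots,u_m^3)\to\F[E]$ is surjective, because $X^{e_i}=1+u_i$ and the $X^{e_i}$ generate. Both sides have dimension $3^m$, so it is an isomorphism. The monomials $u^\alpha=\prod u_i^{\alpha_i}$ with $0\le\alpha_i\le2$ form a basis. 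The augmentation ideal $I$ is spanned by the monomials of positive total degree: it is generated by the $u_i$, because $X^x-1=\prod(1+u_i)^{x_i}-1$ has zero constant term. Consequently $I^k$ is spanned by the monomials of total degree $\ge k$. Every monomial has degree at most $2m$, so $I^{2m+1}=0$.

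For the coefficient claim, each factor $1-X^{y_j}$ lies in $I$, so $P=\prod_{j=1}^{2m-1}(1-X^{y_j})\in I^{2m-1}$. Hence $P$ is an $\F$-linear combination of the monomials of degree $2m-1$ and $2m$. These are the top monomial $T=\prod_i u_i^2$ and, for each $j$, the monomial $T_j=u_j\prod_{i\ne j}u_i^2$. Since $w\mapsto[X^w]P$ is linear in $P$ and affine functions form a vector space, it suffices to check that the coefficient functions of $T$ and of each $T_j$ are affine.

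The computation rests on the identity $u_i^2=(X^{e_i}-1)^2=1+X^{e_i}+X^{2e_i}$ in characteristic $3$. Thus $T=\prod_i(1+X^{e_i}+X^{2e_i})=\sum_{w\in E}X^w$, whose coefficient function is the constant $1$. Similarly,
\[
T_j=(X^{e_j}-1)\prod_{i\ne j}(1+X^{e_i}+X^{2e_i}).
\]
Its coefficient at $w=\sum w_ie_i$ depends only on $w_j$: it equals $-1,1,0$ for $w_j=0,1,2$. One checks that this coincides with the affine function $w\mapsto -w_j-1$ on all three values, so each $T_j$ also contributes an affine function. Therefore $f_R$ is affine. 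I do not expect a real obstacle. The only points needing care are justifying that $I^{k}$ is exactly the span of the monomials of degree $\ge k$, via the isomorphism with the truncated polynomial ring, and the small verification that the three values $-1,1,0$ fit an affine function over $\F$.
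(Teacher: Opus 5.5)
Your proposal is correct and follows essentially the same route as the paper. You identify $\F[E]$ with $\F[u_1,\ldots,u_m]/(u_1^3,\ldots,u_m^3)$, observe that $I^{2m-1}$ is spanned by $\prod_i u_i^2$ and the monomials $u_j\prod_{i\ne j}u_i^2$, and use $u_i^2=1+X^{e_i}+X^{2e_i}$ to get the coefficient functions $1$ and $w\mapsto -1-w_j$. You also make explicit two points the paper leaves implicit: the dimension count behind the isomorphism, and why $I^k$ is exactly the span of the monomials of degree at least $k$.
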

\begin{proof}
Choose a basis $e_1,\ldots,e_m$ of $E$ and set $T_j=X^{e_j}-1$. Then
\[
\F[E]\cong
\F[T_1,\ldots,T_m]/(T_1^3,\ldots,T_m^3),
\qquad I=(T_1,\ldots,T_m).
\]
The residue classes of the monomials with all exponents at most two form a
basis. In particular, $I^{2m+1}=0$, and $I^{2m-1}$ is spanned by
\[
M=\prod_{j=1}^m T_j^2,
\qquad
M_i=T_i\prod_{j\ne i}T_j^2\quad(1\leq i\leq m).
\]
In one coordinate, $T_j^2=1+X^{e_j}+X^{2e_j}$ has constant coefficient
function one. The coefficient function of $T_i$ at coordinate values
$0,1,2$ is $(-1,1,0)$, namely $w_i\mapsto-1-w_i$.
Consequently, the coefficient functions of $M$ and $M_i$ are respectively
$1$ and $-1-w_i$. Each factor $1-X^{y_j}$ belongs to $I$, so the stated
product lies in $I^{2m-1}$ and has an affine coefficient function.
\end{proof}

\begin{lemma}\label{lem:pair}
Suppose that $S=(x_1,\ldots,x_{2m+1})$ is product-one-free.
For every pair $i<j$ with $\omega(x_i,x_j)\ne0$,
\begin{equation}\label{eq:pair}
\sum_{\substack{A\subseteq[2m+1]\\s(A)=0,\ \{i,j\}\subseteq A}}
(-1)^{|A|}=0.
\end{equation}
\end{lemma}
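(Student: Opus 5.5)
The plan is to express the left side of~\eqref{eq:pair} as one coefficient of a product of $2m-1$ augmentation factors. Lemma~\ref{lem:affine} then makes that coefficient an affine function of its position. Summing this affine function along a line in the direction $z$ and applying Lemma~\ref{lem:local}(1) to the other two points of the line should force the wanted value to vanish.

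\emph{Step 1 (reduction to a coefficient).} Fix $i<j$ with $\omega(x_i,x_j)\ne0$. Let $R$ be the sequence of the remaining $2m-1$ terms $x_k$, $k\notin\{i,j\}$, and put
\[
P=\prod_{k\notin\{i,j\}}(1-X^{x_k}).
\]
Expanding $P$ gives $P=\sum_{B}(-1)^{|B|}X^{s(B)}$, where $B$ runs over subsets of $[2m+1]\setminus\{i,j\}$. The sets $A\supseteq\{i,j\}$ correspond bijectively to such $B$ via $A=B\cup\{i,j\}$. Under this bijection $(-1)^{|A|}=(-1)^{|B|}$, and $s(A)=w$ holds exactly when $s(B)=w-x_i-x_j$. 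So for every $w\in E$,
\[
g(w):=\sum_{\substack{A\supseteq\{i,j\}\\ s(A)=w}}(-1)^{|A|}=f_R(w-x_i-x_j),
\]
and the left side of~\eqref{eq:pair} is $g(0)$.

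\emph{Step 2 (affine structure).} By Lemma~\ref{lem:affine}, $f_R$ is affine on $E$, and therefore so is $g$. Over $\F$, an affine function $h$ satisfies $h(w)+h(w+v)+h(w+2v)=3h(w)+3\lambda(v)=0$ for all $w,v$, where $\lambda$ is the linear part. Taking $w=0$ and $v=z$ gives
\[
g(0)+g(z)+g(-z)=0.
\]

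\emph{Step 3 (killing the other two points).} The value $g(\pm z)$ is a signed count of sets $A\supseteq\{i,j\}$ with $s(A)=\pm z$. By Lemma~\ref{lem:local}(1), every such $A$ consists of pairwise commuting terms. Every such $A$ also contains $x_i$ and $x_j$, which do not commute. Hence no such $A$ exists, so $g(z)=g(-z)=0$, and therefore $g(0)=0$, which is~\eqref{eq:pair}.

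The only point needing care is the affine line-sum identity in Step 2, together with checking that Lemma~\ref{lem:affine} really applies: $R$ has exactly $2m-1$ terms. Beyond that, the argument seems to have no real obstacle.
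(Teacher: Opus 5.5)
Your proof is correct and is essentially the paper's argument. Both remove the occurrences $i,j$, identify the left side of \eqref{eq:pair} with $f_R(-x_i-x_j)$, use Lemma~\ref{lem:local}(1) to see that no admissible set exists at $\pm z-x_i-x_j$, and conclude by affineness of $f_R$. Your three-point line sum $g(0)+g(z)+g(-z)=0$ is the paper's midpoint identity $f_R(w)=\tfrac12\bigl(f_R(w+z)+f_R(w-z)\bigr)$ written differently, since $\tfrac12=-1$ in $\F$.
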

\begin{proof}
Remove the occurrences $i,j$ from $S$ to obtain a sequence $R$ of length
$2m-1$. No subsequence of $R$ has sum $z-x_i-x_j$ or $-z-x_i-x_j$:
adjoining $i,j$ would contradict Lemma~\ref{lem:local}(1).
Thus the coefficient function in Lemma~\ref{lem:affine} vanishes at both
of these points. Since it is affine,
\[
f_R(-x_i-x_j)
=\tfrac12\bigl(f_R(z-x_i-x_j)+f_R(-z-x_i-x_j)\bigr)=0.
\]
Expanding the product defining $f_R$ gives~\eqref{eq:pair}; adding two
occurrences does not change the sign.
\end{proof}

\section{Triangle weights and the upper bound}
Consider the graph on the eight nonzero vectors of $P=\F^2$ in which
$u,v$ are joined when $\det(u,v)\ne0$. Every edge belongs to exactly one
zero-sum triangle:
\begin{equation}\label{eq:triangle}
\{u,v,-u-v\}.
\end{equation}
The three vertices in~\eqref{eq:triangle} are distinct and pairwise
nonorthogonal. Each of its edges recovers the same third vertex.
Every vertex has six neighbors, so the graph has $24$ edges, partitioned
into eight such triangles.

Select one edge from each triangle and define a symmetric weight function
$\rho:P\times P\to\F$ by assigning weight one to the selected edges and
zero to all other pairs. In particular, orthogonal pairs receive weight
zero. This choice can be made without computation: fix any total order
on $P$ and choose the lexicographically first edge of each triangle.
For each triangle~\eqref{eq:triangle}, the sum of its three edge weights
is one.

\begin{lemma}\label{lem:weight}
Suppose that $S$ is product-one-free. If $A\ne\varnothing$ and $s(A)=0$,
then
\begin{equation}\label{eq:weight}
\sum_{\substack{i,j\in A\\i<j}}\rho(\pi(x_i),\pi(x_j))=1.
\end{equation}
\end{lemma}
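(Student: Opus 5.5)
We show that the noncommutation graph on the occurrences in $A$ consists of exactly one triangle together with isolated vertices, and that this triangle projects under $\pi$ onto a zero-sum triangle~\eqref{eq:triangle} in $P$. Given that, the sum in~\eqref{eq:weight} reduces to the three edge weights of one triangle, which total one. Pairs $i,j$ with $\omega(x_i,x_j)=0$ contribute nothing: either $\pi(x_i)$ or $\pi(x_j)$ is zero (and $\rho$ of such a pair is zero), or the two projections are nonzero and orthogonal, hence receive weight zero by construction.

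\textbf{Step 1: at least one noncommuting pair.} Lemma~\ref{lem:local}(2) applies since $A\ne\varnothing$ and $s(A)=0\in\F z$, so $A$ contains a noncommuting pair.

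\textbf{Step 2: every two noncommuting pairs share an occurrence.} Lemma~\ref{lem:local}(3) says $A$ cannot contain two disjoint noncommuting pairs. The edge set of the noncommutation graph on $A$ is therefore an intersecting family of $2$-sets. Such a family is either a star or a triangle.

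\textbf{Step 3: excluding a star.} Suppose all noncommuting pairs contain a common occurrence $i\in A$. Put $p=\pi(x_i)\ne0$; it is nonzero because $x_i$ has a noncommuting partner. Every other $x_j$ with $j\in A\setminus\{i\}$ then commutes with every $x_k$, $k\in A\setminus\{i\}$. Moreover
\[
\sum_{j\in A\setminus\{i\}}\pi(x_j)=-p,
\]
since $\pi(s(A))=0$. Hence
\[
\omega\Bigl(x_i,\sum_{j\ne i}x_j\Bigr)=\omega(x_i,-x_i)=0.
\]
So the pairings of $x_i$ with the other terms sum to zero in $\F$, while at least one of them is nonzero. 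Consequently at least two pairings are nonzero, say with $j$ and $k$.

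I would then try to produce a contradiction by choosing an ordering. Place $x_i$ between blocks and move it across $x_j$ and $x_k$; the corrections $\tfrac12\omega$ then shift by the values $\omega(x_i,x_j)$ and $\omega(x_i,x_k)$. Since no other pairs contribute, the correction is a sum of a subset of the pairings of $x_i$, and these partial sums should cover $\F$, so some correction equals $0$, giving product one.

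More cleanly: order the terms as $B,x_i,C$. The correction is $\tfrac12\bigl(\omega(s(B),x_i)+\omega(x_i,s(C))\bigr)=\omega(s(B),x_i)$, using $s(C)=-x_i-s(B)$. As $B$ ranges over subsets of $A\setminus\{i\}$, $\omega(s(B),x_i)$ takes values in sums of subsets of nonzero pairings. Two nonzero elements $u,v$ give subset sums $\{0,u,v,u+v\}$. If $u=v$ these already contain $\{0,u,2u\}=\F$; if $u=-v$ they contain $0$. Either way the value $0$ is attained (indeed $B=\varnothing$ already gives correction $0$, since all other pairs commute), contradicting product-one-freeness. Verifying this ordering computation precisely, in particular which pairs enter the correction, is the main obstacle.

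\textbf{Step 4: the triangle.} The noncommuting pairs therefore form exactly a triangle on occurrences $i,j,k$, with all other occurrences isolated. Isolated occurrences $l$ satisfy $\omega(x_l,x_i)=\omega(x_l,x_j)=0$ for noncommuting $x_i,x_j$. Since $\pi(x_i),\pi(x_j)$ span $P$, this forces $\pi(x_l)=0$. Then $\pi(x_i)+\pi(x_j)+\pi(x_k)=0$, so the projections form a triangle~\eqref{eq:triangle}, whose weights sum to one.
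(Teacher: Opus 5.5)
Your proof is correct. Steps 1, 2 and 4 coincide with the paper's argument; the only real difference is how you exclude the star.

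The paper notes that no occurrence can have degree one in the noncommutation graph. If $j$ had unique neighbor $i$, then $0=\omega(x_j,s(A))=\omega(x_j,x_i)\ne0$. Every star, including a single edge, has such a leaf, so only the triangle survives.

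You apply the same identity $\omega(x_i,s(A))=0$ to the center instead. That only shows the center has degree at least two, so you close the gap with an ordering. Place $x_i$ first. By the star hypothesis every pair not containing $i$ has zero pairing, so by \eqref{eq:order} the correction is $\tfrac12\,\omega(x_i,s(A)-x_i)=0$, and the ordered product over $A$ is the identity. This computation is exactly right, so the step you flagged as the main obstacle is not one. The subset-sum discussion about $u=\pm v$ can be deleted; the choice $B=\varnothing$ alone suffices.

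The paper's route excludes the star using only bilinearity and $s(A)=0$, without appealing to product-one-freeness again. Your route gives an explicit product-one witness instead. It amounts to a small strengthening of Lemma~\ref{lem:local}(2): it covers the case where all noncommuting pairs share one occurrence. Both are one-line arguments once the detour is removed.
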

\begin{proof}
Make a graph on the occurrences in $A$, joining $i$ and $j$ when
$\omega(x_i,x_j)\ne0$. By Lemma~\ref{lem:local}, it has at least one edge
and no two disjoint edges. No vertex can have degree one: if $j$ were the
unique neighbor of $i$, then
\[
0=\omega(x_i,s(A))=\omega(x_i,x_j)\ne0.
\]

A graph with at least one edge and no two disjoint edges consists of a star
or a triangle together with isolated vertices. To see this, if two edges
are $ab,ac$ and all edges contain $a$, the graph is a star. Otherwise an
edge avoiding $a$ must be $bc$; every edge must then lie in $\{a,b,c\}$.
The single-edge case is also a star. The absence of degree-one vertices
therefore leaves exactly one triangle together with isolated vertices.

The projections of two adjacent triangle terms form a basis of $P$.
Every isolated term is orthogonal to both, so its projection is zero.
Since $s(A)=0$, the three triangle projections sum to zero and therefore
form one of~\eqref{eq:triangle}. All other pairs have zero weight, proving
\eqref{eq:weight}.
\end{proof}

\begin{proof}[Proof of Theorem~\ref{thm:main}]
Write $G=H_{27}\times C_3^r$. For the upper bound, suppose that
$S=(x_1,\ldots,x_N)$ is product-one-free
with $N=2m+1$. By Lemma~\ref{lem:affine},
\[
\prod_{i=1}^{N}(1-X^{x_i})=0.
\]
Taking the coefficient of $X^0$ and separating the empty subset yields
\begin{equation}\label{eq:total}
\sum_{\substack{\varnothing\ne A\subseteq[N]\\s(A)=0}}(-1)^{|A|}=-1.
\end{equation}
On the other hand, multiply~\eqref{eq:pair} by
$\rho(\pi(x_i),\pi(x_j))$ and sum over noncommuting pairs. Nonzero weights
occur only for such pairs. Interchanging the finite sums and using
Lemma~\ref{lem:weight}, we obtain
\begin{align*}
0
&=\sum_{\substack{\varnothing\ne A\subseteq[N]\\s(A)=0}}
(-1)^{|A|}
\sum_{\substack{i,j\in A\\i<j}}\rho(\pi(x_i),\pi(x_j))\\
&=\sum_{\substack{\varnothing\ne A\subseteq[N]\\s(A)=0}}(-1)^{|A|},
\end{align*}
contrary to~\eqref{eq:total}. Hence $\dc(G)\leq2m$.

For the lower bound, choose $u=(1,0,0,0)$ and $v=(0,1,0,0)$, and let
$c_1,\ldots,c_r$ be the standard basis of the last direct summand.
Write $x^{[2]}$ for two occurrences of $x$ in a sequence. Then
\[
u^{[2]}\boldsymbol\cdot v^{[2]}\boldsymbol\cdot z^{[2]}
\boldsymbol\cdot c_1^{[2]}\boldsymbol\cdots\boldsymbol\cdot c_r^{[2]}
\]
is product-one-free. Indeed, a product-one subsequence must first have
zero image in $E/\F z$. The images of $u,v,c_1,\ldots,c_r$ are independent,
and their selected multiplicities lie in $\{0,1,2\}$, so none can occur.
The selected multiplicity of $z$ must then also be zero. Only the empty
subsequence could have product one. The displayed sequence has length
$2r+6=2m$, proving the result.
\end{proof}

\section{Structural formulation and scope}
\begin{corollary}\label{cor:structural}
Let $G$ be a finite group of exponent $3$ with $|G:Z(G)|=9$. Then
$G\cong H_{27}\times C_3^r$ for some $r\geq0$, and
\[
\dc(G)=2\log_3|G|.
\]
\end{corollary}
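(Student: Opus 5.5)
The plan is to prove the structural classification first and then read off the value of $\dc(G)$ from Theorem~\ref{thm:main}.

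First, $G$ is nonabelian, since otherwise $|G:Z(G)|=1$. So $G/Z(G)$ has order $9$. It is not cyclic, because a cyclic central quotient forces $G$ abelian. Having exponent dividing $3$, it is $C_3^2$. Choose $g,h\in G$ whose images form a basis of $G/Z(G)$. Then $G=\langle g,h\rangle Z(G)$ and $G'=\langle [g,h]\rangle$. Since $G/Z(G)$ is abelian, $G'\le Z(G)$, so $G$ has class $2$. The commutator map is then bilinear on $G/Z(G)$, and $c=[g,h]$ is a central element of order $3$; it is nontrivial because $g,h$ do not commute.

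Next, $K=\langle g,h\rangle$ is a two-generated class-$2$ group of exponent $3$ with $K'=\langle c\rangle$ of order $3$. It has order $27$: $K/K'$ is elementary abelian of rank at most $2$, and nonabelianness forces $|K|=27$. Hence $K\cong H_{27}$, the unique nonabelian group of order $27$ and exponent $3$. Note $Z(K)=\langle c\rangle=K\cap Z(G)$, since an element of $K$ central in $K$ commutes with $g,h$ and with $Z(G)$.

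The center $Z(G)$ is abelian of exponent $3$, hence elementary abelian and an $\F$-vector space. Choose a complement $C$ to the line $\langle c\rangle$ in it. Then $K\cap C=1$, both are normal (as $C$ is central), and $KC=K\langle c\rangle C=KZ(G)=G$. Hence $G=K\times C\cong H_{27}\times C_3^r$ with $|C|=3^r$.

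Finally, $|G|=3^{3+r}$, so $2\log_3|G|=2r+6$. Theorem~\ref{thm:main} then gives $\dc(G)=2\log_3|G|$.

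The only delicate step is the splitting in the third paragraph. It requires seeing that $K\cap Z(G)$ is exactly $\langle c\rangle$, so that a vector-space complement in $Z(G)$ gives a direct complement to $K$. The rest is routine.
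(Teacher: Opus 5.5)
Your proof is correct and follows the paper's argument. You lift a basis of $G/Z(G)\cong C_3^2$ to $g,h$, identify $K=\langle g,h\rangle\cong H_{27}$ with $K\cap Z(G)=\langle [g,h]\rangle$, split off a complement of $\langle [g,h]\rangle$ in the elementary abelian center, and apply Theorem~\ref{thm:main}. The only cosmetic difference is how you get $|K|=27$ and $K\cong H_{27}$: you use an order bound plus the classification of groups of order $27$, whereas the paper counts the $27$ distinct normal forms $u^av^bz^c$ directly.
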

\begin{proof}
The quotient $G/Z(G)$ is elementary abelian of order nine, so $G$ has
nilpotency class at most two. Choose $u,v$ whose images form a basis of
this quotient. Their commutator $z=[u,v]$ is nontrivial, since otherwise
$G$ would be abelian. Because commutators are central and all elements
have order dividing three, $K=\langle u,v\rangle$ has the $27$ distinct
normal forms $u^a v^b z^c$, where $a,b,c\in\{0,1,2\}$. Thus
$K\cong H_{27}$ and $K\cap Z(G)=\langle z\rangle$.
The center is elementary abelian; choose a complement $C$ to
$\langle z\rangle$ in $Z(G)$. Since $G=KZ(G)$, we obtain $G=K\times C$.
Theorem~\ref{thm:main} now applies.
\end{proof}

The rank-two condition on the scalar alternating form is used in
Lemma~\ref{lem:weight}: a vector orthogonal to two independent vectors in
$\F^2$ is zero. In a symplectic space of dimension four, their common
orthogonal complement can be nonzero. Isolated vertices may then
contribute to the additive sum, so the nonorthogonal triangle need not
itself be zero-sum. The plane weighting argument consequently does not
by itself prove the analogous formula in higher rank.

Theorem~\ref{thm:main} makes no assertion for other primes, for higher
commutator rank, or for a commutator subgroup of dimension greater than
one. A natural further target is
\[
\dc(H_2(\mathbb F_3)\times C_3^r)=2r+10\qquad(r\geq0),
\]
where $H_2(\mathbb F_3)$ is the extraspecial group of order $3^5$ and
exponent three. This further statement is not proved here.

\paragraph{Use of AI-assisted tools.}
OpenAI's ChatGPT was used to develop proof strategies, formulate and
check intermediate arguments, search the literature, and prepare the
manuscript. These AI-assisted checks do not constitute independent human
peer review. The proof presented here is self-contained and does not rely
on computational enumeration.

\end{document}